\newtheorem{theorem}{Theorem}[section]
\newtheorem{lemma}[theorem]{Lemma}
\newtheorem{definition}[theorem]{Definition}
\newtheorem{corollary}[theorem]{Corollary}
\newtheorem{conjecture}[theorem]{Conjecture}
\newcommand{\F}{\mathcal{F}}
\DeclareMathAccent{\widehat}{\mathord}{largesymbols}{"62}
\begin{document}

\begin{frontmatter}[classification=text]

\title{Monochromatic Sums and Products of Polynomials} 

\author[ra]{Ryan Alweiss\thanks{Research supported by an NSF Mathematical Sciences Postdoctoral Research Fellowship.}}

\begin{abstract}
We show that the pattern $\{x,x+y,xy\}$ is partition regular over the space of formal integer polynomials of degree at least one with zero constant term, with primitive recursive bounds.  This provides a new proof for the partition regularity of $\{x,x+y,xy\}$ over $\mathbb{N}$, which gives the first primitive recursive bound.
\end{abstract}
\end{frontmatter}


\section{Introduction}~\label{section:introduction}

A pattern $\F$ is \emph{partition regular} over a set $S$, usually taken to be $\mathbb{N}$, if in any coloring of $S$ with finitely many colors, there is some monochromatic copy of $\F$.  Partition regularity is an important subfield of Ramsey theory, and encompasses results such as van der Waerden's theorem, which states that the pattern $\{x,x+d,\cdots, x+nd\}$ is partition regular over $\mathbb{N}$, and Schur's theorem, which states that $\{y,x,x+y\}$ is partition regular over $\mathbb{N}$. In 1933, Rado~\cite{rado} completely characterized partition regularity over $\mathbb{N}$ when $\F$ is taken to be the solution set to a linear system of equations.

Over polynomials, the situation is much less well understood.  The pattern $\{x,y,xy\}$ is known to be partition regular over $\mathbb{N}$, because we can color powers of $2$ and then find a pattern $\{2^x,2^y,2^{x+y}\}$ using Schur. It is natural to wonder whether there is a common generalization of this multiplicative form of Schur's theorem and the ordinary additive form.  This question, of whether $\{x,y,x+y,xy\}$ is partition regular over $\mathbb{N}$, has been repeated by several authors (see e.g. \cite{bergelson}, \cite{hindman}, \cite{moreira}, \cite{green}), and is the first unsolved case of a central and longstanding open problem in partition regularity due to Hindman \cite{hindold}. In a 2016 breakthrough, Moreira \cite{moreira} proved that the pattern $\{x,x+y,xy\}$ is partition regular over $\mathbb{N}$.  Before 2016, it was not even known if $\{x+y,xy\}$ was partition regular. Moreira's proof uses extensively the following notion from dynamics.

\begin{definition}
	A subset $S$ of $\mathbb{N}$ is \emph{syndetic} if it has bounded gaps, i.e. there exists a finite set $F \subset \mathbb{Z}$ so that $S+F \supset \mathbb{N}$.  A subset $T$ of $\mathbb{N}$ is \emph{thick} if it contains arbitrary long intervals.  A \emph{piecewise syndetic} subset $U$ is the intersection of a thick set with a syndetic set.
	\end{definition} 
	
Moreira iterates the ``piecewise syndetic" version of the van der Waerden theorem, in which the size of the set $F$ blows up very quickly. Thus, he does not obtain primitive recursive bounds.

Following Moreira's work, there has recently been exciting progress on Hindman's conjecture, with Bowen and Sabok \cite{bowenrational} proving that $\{x,y,x+y,xy\}$ is partition regular over the rationals. Over the integers, the problem is much more difficult.  Recently, Bowen \cite{bowen} proved that any two-coloring of $\mathbb{N}$ has infinitely many monochromatic $\{x,y,x+y,xy\}$.

In this paper, we prove that $\{x,x+y,xy\}$ is partition regular in a more general setting. Let a \emph{good} polynomial be a polynomial with zero constant term and non-negative integer coefficients.  Let $T$ be the semi-ring of nonzero good polynomials of a variable $t$.  In other words, $T$ consists of all polynomials of $t$ with nonnegative integer coefficients and $0$ constant term.  It is important that $T$ be defined with no constant term, as otherwise since $\mathbb{N} \subset T$ this would not be any stronger than Hindman over $\mathbb{N}$. 

We can add and multiply freely in $T$, and thus consider polynomial expressions of $x,y \in T$.  As such we can consider the partition regularity of families of polynomial equations, where our variables $x,y$ are in $T$ and are themselves polynomials of $t$.  For instance, the pattern $\{x,y,xy\}$ is partition regular in $T$, because we can always find some $x=t^a$ and $y=t^b$ so that $t^a, t^b, t^{a+b}$ are all the same color. 

\begin{theorem}
	For any set of good polynomials $P_1, \cdots, P_k$, the pattern $\{x,x+P_1(y), \cdots, x+P_k(y), xy\}$ is partition regular over $T$ with primitive recursive bounds, and is thus also partition regular over $\mathbb{N}$ with primitive recursive bounds.
\end{theorem}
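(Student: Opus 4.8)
I would first reduce to the statement over $T$. The implication from $T$ to $\mathbb{N}$ is obtained by pulling back along the evaluation homomorphism $\operatorname{ev}_2\colon T\to\mathbb{N}$, $f\mapsto f(2)$: given a finite colouring $c'$ of $\mathbb{N}$, colour $T$ by $c=c'\circ\operatorname{ev}_2$; since $\operatorname{ev}_2$ is a semi-ring homomorphism, a monochromatic copy of $\{x,x+P_1(y),\dots,x+P_k(y),xy\}$ in $T$ evaluates to a monochromatic copy of the pattern in $\mathbb{N}$ with parameters $x(2),y(2)$ (which are genuine positive integers, indeed $\ge 2$, because elements of $T$ have zero constant term), and the primitive recursive bound carries over verbatim. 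So from now on the goal is partition regularity over $T$ with primitive recursive bounds.

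The key is to exploit the base-$t^N$ structure of $T$. Fix an $r$-colouring $c$ of $T$, let $D=\max_j\deg P_j$, choose a scale $N$ (to be taken large but bounded primitive recursively in $r$, $k$ and the $P_j$), and look for the pattern with $y$ a monomial $t^{sN}$. Writing a polynomial in base $t^N$, as $\sum_{i\ge 1}b_i(t)\,t^{iN}$ with each digit $b_i$ of degree $<N$ and nonnegative integer coefficients, makes all three ingredients transparent: $xy=x\,t^{sN}$ is the shift of the digit string by $s$ places; $x+P_j(t^{sN})$ agrees with $x$ outside boundedly many low-order digits, where it is changed by a fixed amount depending only on $N,s,P_j$; and --- decisively, and this is exactly where $T$ is better behaved than $\mathbb{N}$ --- polynomial addition has no carrying, so all of this stays inside the digit picture exactly. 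Restricting $c$ to polynomials with boundedly many, boundedly large digits then turns it into a finite colouring of a combinatorial cube, and the pattern becomes the problem of finding a word $w$, together with a bounded family of fixed additive perturbations of $w$ in its low-order coordinates and a shift $\sigma^{s}w$ of $w$, all of one colour.

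To solve that finite problem I would feed it into primitive-recursively-bounded partition theorems --- van der Waerden's theorem with Shelah-type bounds, its multidimensional (Gallai) form, and the Hales--Jewett / Graham--Rothschild machinery --- running an induction on $k$, so that each of the $k$ polynomials gets absorbed by one further bounded application; the base case $k=0$ is immediate, as two of $t^1,\dots,t^{r+1}$ share a colour and already give $\{x,xy\}$. The hard part, which I expect to be the main obstacle, is the step that really uses the polynomial setting: reconciling the multiplicative shift with the additive perturbations, i.e.\ arranging that a single $y=t^{sN}$ simultaneously witnesses $c(x)=c(xy)$ and lies inside a monochromatic additive configuration through $x$ and all the $x+P_j(y)$. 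The plan there is to build $x$ so that it is periodic --- hence shift-invariant up to position --- in its high-order digits, placing that periodic run at a position drawn from a monochromatic arithmetic progression of positions produced by van der Waerden, while the perturbations $P_j(y)$ only ever disturb a fixed block of low-order digits that the unbounded, mutually independent supply of digit positions coming from the variable $t$ lets us keep disjoint from the run. It is precisely this disjointness, together with carry-freeness, that should let the whole construction terminate after a fixed finite number of partition-theorem applications --- whereas over $\mathbb{N}$ the analogous reconciliation in Moreira's argument forces an unbounded iteration of the piecewise-syndetic van der Waerden theorem and so gives no primitive recursive bound.
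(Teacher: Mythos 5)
Your reduction from $T$ to $\mathbb{N}$ (pull back the colouring along evaluation at $2$) is exactly the paper's Lemma~\ref{lemma:contain}, and your base case $k=0$ via pigeonhole on $t^1,\dots,t^{r+1}$ is fine. But the core of the theorem is not proved: the step you yourself flag as ``the hard part'' --- arranging that one $y$ simultaneously witnesses $c(x)=c(xy)$ and the monochromatic additive configuration through $x, x+P_1(y),\dots,x+P_k(y)$ --- is left as a plan, and the plan as stated does not work. Making $x$ periodic in its high-order base-$t^N$ digits gives no control whatsoever over the colour of $xy=t^{sN}x$ under an \emph{arbitrary} colouring: the colouring need not be shift-invariant in the digit picture (it can, for instance, depend on the degree or on the lowest-order digit, both of which the shift changes), so ``periodicity plus van der Waerden on positions'' never produces the equality $c(x)=c(xy)$. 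In the reformulation as a colouring of finite words, you are asking for a word and its shift (a word of different length/support) to share a colour, and no amount of Hales--Jewett or Graham--Rothschild applied to the additive perturbations alone can force that; some mechanism that couples the product to the additive structure is required, and none appears in your sketch.

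The missing idea is the distributive-law trick that drives the paper's proof (and Moreira's). One first uses the polynomial van der Waerden theorem over the semigroup $T$ to produce $x,d$ with $x+P(d)$ monochromatic, say red, for all good $P$ of bounded size; then the key observation is that the products $d\,(x+P(d)) = dx + dP(d)$ land in a \emph{translated additive configuration} based at $dx$, again parametrised by good polynomials of $d$. So either some such product is red --- giving the pattern with $a=x+P(d)$, $b=d$, since $a$, $a+b$, $ab$ are then all red --- or every product avoids red, and one recurses inside the remaining colours (formally, the paper's Lemma~3.1 builds a tower $x_k,\ a_1x_k,\ a_1a_2x_k,\dots$, each level carrying its own monochromatic polynomial configuration, and finishes by pigeonholing the colours of the levels). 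It is this colour-focusing on products, not any structural property of the digit expansion of $x$, that reconciles the multiplicative and additive constraints; carry-freeness in $T$ is only what lets the quotients $f/(a_{i+1}\cdots a_k)$ stay good polynomials so that the recursion closes. Without this (or an equivalent) mechanism, your induction on $k$ has no way to start relating $c(x)$ to $c(xy)$, so the proposal has a genuine gap at precisely the point where the theorem's difficulty lies.
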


In particular, we have the following corollary.

\begin{corollary}
	The pattern $\{x+y,xy\}$ is partition regular over $\mathbb{N}$ with primitive recursive bounds, i.e. there is some primitive recursive $N=N(n)$ so that when $[N]=\{1,2, \cdots, N\}$ is colored with $n$ colors, there is a monochromatic $\{x+y,xy\}$.
\end{corollary}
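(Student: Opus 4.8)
The plan is to derive the corollary directly from the theorem by specializing the parameters and then projecting from $T$ down to $\mathbb{N}$. First I would apply the theorem with $k=1$ and $P_1(y)=y$, which is certainly a good polynomial (zero constant term, coefficient $1$). The theorem then says that the pattern $\{x, x+y, xy\}$ is partition regular over $T$ with primitive recursive bounds; since a monochromatic copy of $\{x,x+y,xy\}$ in particular contains a monochromatic copy of the sub-pattern $\{x+y, xy\}$, we get that $\{x+y,xy\}$ is partition regular over $T$ with primitive recursive bounds.

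Next I would transfer this to $\mathbb{N}$. The standard device here is evaluation: fix a large integer $t_0$ (depending primitively recursively on the bound $N$ coming from the $T$-statement, e.g. $t_0 = N+1$ or any value exceeding all coefficients in play) and observe that the evaluation map $t \mapsto t_0$ sends $T$ into $\mathbb{N}$, and that it is \emph{injective} on the set of good polynomials whose coefficients and degree are bounded by the relevant finite window, because writing an integer in base $t_0$ is unique. Crucially this map is a semi-ring homomorphism: it respects both addition and multiplication, so it carries a solution $\{x, y, x+y, xy\}$ in $T$ to the solution $\{x(t_0), y(t_0), x(t_0)+y(t_0), x(t_0)y(t_0)\}$ in $\mathbb{N}$. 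Given an $n$-coloring of $[N]$ for a suitable primitive recursive $N$, I would pull it back along evaluation to an $n$-coloring of the finite piece of $T$ that the $T$-version of the theorem needs, invoke that version to get a monochromatic $\{x+y, xy\}$ there, and then push forward to obtain a monochromatic $\{x+y,xy\}$ in $[N]$. One must check the quantitative bookkeeping: the $T$-theorem only guarantees a monochromatic pattern among good polynomials of bounded degree and bounded coefficients, all of whose values under $t\mapsto t_0$ stay below $N$; choosing $t_0$ and $N$ in terms of the primitive recursive function furnished by the theorem keeps everything primitive recursive, and this is precisely the content of the phrase ``and is thus also partition regular over $\mathbb{N}$'' already asserted in the theorem.

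The only genuinely delicate point — and the one I would be most careful about — is making sure the evaluation argument is honest about the bounds: we need the finitely many polynomials involved in the $T$-pattern to have controlled size so that their images are genuine elements of $[N]$ with the inherited coloring, and we need $x+y$ and $xy$ (not just $x$ and $y$) to land in $[N]$. This is a matter of tracking constants through the primitive recursive bound rather than a conceptual obstacle, since primitive recursive functions are closed under the composition and iteration used here. Everything else is a formality, so the corollary follows.
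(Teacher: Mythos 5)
Your proposal is correct and follows essentially the same route as the paper: the corollary is just the case $k=1$, $P_1(y)=y$ of the Theorem (with $\{x+y,xy\}$ a sub-pattern of $\{x,x+y,xy\}$), and the transfer from $T$ to $\mathbb{N}$ with primitive recursive bounds is exactly the evaluation argument of Lemma~\ref{lemma:contain}, where the paper pulls a coloring of $\mathbb{N}$ back along $p \mapsto p(2)$. The only cosmetic difference is that you evaluate at a large $t_0$ and invoke injectivity via base-$t_0$ representations; this is harmless but not needed, since pulling the coloring back along any evaluation homomorphism already forces the image pattern to be monochromatic.
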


Moreira's proof \cite{moreira} does not obtain any explicit bounds.  Furthermore, his proof works for polynomials over a finite field, but not for polynomials with rational or integer coefficients; he notes specifically that his Lemma 7.4 breaks down.  Partition regularity over $T$ (with primitive recursive bounds) is stronger than partition regularity over $\mathbb{N}$ (with primitive recursive bounds); we prove this in detail in Section~\ref{section:discussion}.  

An important feature of our proof is that it considers high degree polynomials, which we believe is necessary for settling Hindman's conjecture.  Previous work in this area implicitly works over a setting in which we conjecture that Hindman's conjecture is actually false.  This is discussed more in Section~\ref{section:discussion}. Because our proof is the first to take advantage of higher degree polynomials, it relies in an intrinsic way on the polynomial van der Waerden theorem.

Whereas Moreira uses the linear van der Waerden theorem to prove $\{x,x+y,xy\}$ is partition regular, we prove a better bound by instead using the polynomial van der Waerden theorem.  Recall the polynomial van der Waerden theorem of Bergelson and Leibman \cite{bl2}, which states that for any set of good polynomials $P_1, \cdots, P_k$, the pattern $x+P_1(y), \cdots, x+P_k(y)$ is partition regular over $\mathbb{N}$.  This holds over any semigroup \cite{moreira} and there is a simple color-focusing proof \cite{walters} straightforwardly generalizing the ordinary color-focusing proof of van der Waerden theorem.  A result of Shelah \cite{shelah} provides primitive recursive bounds in the polynomial Hales-Jewett theorem of Bergelson and Leibman \cite{polyhj}, and thus over any semigroup \cite{moreira}.  

To some extent, working over the polynomials themselves can be viewed as analogous to extending results like van der Waerden theorem to the Hales-Jewett theorem; working over these more general (semi)-rings like $T$ abstracts away the properties of $\mathbb{N}$, and captures the combinatorial structure.  In particular, we avoid tools from dynamics such as the piecewise syndetic version of the van der Waerden theorem, and in fact our proof is ``purely combinatorial" does not use the notions of syndetic, piecewise syndetic, or thickness at all.  This again distinguishes our proof from the existing proofs in the literature. 

We hope our new perspective will continue to find further applications and shine some light on Hindman's conjecture. Using the methods developed in this paper, the author subsequently settled the original conjecture of Hindman over the rationals \cite{alweiss}. 

The structure of the paper is as follows.  First, in Section \ref{section:3color}, we describe a proof of the partition regularity of the pattern $\{x,x+y,xy\}$ in the $3$-color case, to convey the general idea of our paper. Then, we discuss our proof of $\{x,x+y,xy\}$ for arbitrarily many colors in Section \ref{section:general}. Finally we conclude in Section \ref{section:discussion} with some discussions and open problems. 

\section{The $3$-color case}~\label{section:3color}

We will first describe the proof of the partition regularity of the pattern $\{a,a+b,ab\}$ for polynomials over $3$ colors, red, blue, and green.

We will call a polynomial $P$ of one or more variables \emph{good} if it has zero constant term and nonnegative integer coefficients.  If $x$ is an element of some semi-ring or ring $R$ and $P$ is a good polynomial, then if $x$ is in $R$, $P(x)$ will also be in $R$, since rings and semi-rings are closed under addition and multiplication.  Let $T$ be the semi-ring of nonzero good polynomials of a single variable $t$; for us, a semi-ring or ring does not necessarily have $1$.  Our goal is to prove that $\{x,x+y,xy\}$ is partition regular over $T$.  In other words, we will prove that in any finite coloring of $T$, there exist some $x=x(t)$ and $y=y(t)$ which are good polynomials of $T$ so that $x,x+y,xy$ are all the same color.  The \emph{size} $s(P)$ of a good polynomial $P$ is the largest $c$ so that its degree and its coefficients are all at most $c$. The size of the $0$ polynomial will be $0$. Recall the polynomial van der Waerden theorem \cite{bl2}, which by the polynomial Hales-Jewett theorem holds over any semigroup \cite{joelblog}.

Assume that there exists a coloring of the semi-ring $T$ without the pattern $\mathcal{F}=\{a,a+b,ab\}$.  By the polynomial van der Waerden theorem, for any $c$, there exist some $x,d$ so that $x+P(d)$ is red for all good $P$ of size at most $c$.  Pick $c=c_0$ to be specified later.

Now, because we do not have the pattern $\{a,a+b,ab\}$, and for any good $P$ of size less than $c_0$, and we have the pattern $\{(x+P(d)),(x+P(d))+d\}$, we know $d(x+P(d))$ cannot be red for any good $P$ of size less than $c_0$. Hence, the terms $dx+dP(d)$ are all blue or green.  

Again by the polynomial van der Waerden theorem over semigroups \cite{joelblog}, if $c_0$ is large enough, we can find without loss of generality some $f,e$ of the form $dP(d)$ (even of the form $P(d^2)$) so that $dx+f+Q(e)$ are all blue for good $Q$ of size at most $c_1$ and that $x+\frac{f}{d}+P(d,\frac{e}{d})$ are red for good $P$ of size at most $c_1$, as we will have $x+\frac{f}{d}+P(d,\frac{e}{d})=x+P'(d)$ for good $P'$ of size at most $c_0$.  We rename $x+\frac{f}{d}$ as $x$, so that $x+P(d,\frac{e}{d})$ are all red, while now $dx+P(e)$ are all blue, again for all good $P$ of size at most $c_1$.  In particular, $c_0$ will be a primitive recursive function of $c_1$.

Now, because $\{x,x+de\}$ and $\{x+\frac{e}{d},x+\frac{e}{d}+de\}$ are red, $dex$ and $dex+e^2$ are not red.  As $\{dx,dx+e\}$ and $\{dx+e,dx+e+e\}$ are blue, they also cannot be blue.  So $dex, e(dx+e)=dex+e^2$ are both green.  This means $de^3x$ cannot be green.  Because $\{x,x+de^3\}$ are red, $de^3x$ cannot be red.  But because $\{dx,dx+e^3\}$ are blue, $de^3x$ cannot be blue.  This gives a contradiction, and we are done.

\section{Partition Regularity of Sums and Products in $T$}~\label{section:general}

We now present the proof of the general case.  The idea is the same as in the last section, although it is a little more technical to write out formally.  We prove the following lemma.  The $c_i$ will shrink sufficiently quickly; what matters for us is that $c_i$ is a primitive recursive function of $c_{i+1}$.

\begin{lemma}
	
For any coloring $C$ of $T$ and any positive integers $k$ and $c_k$, there exist some sequence of positive integers $c_0, c_1, c_2, \cdots, c_k$ and elements $b_1, \cdots, b_k$ of $T$, so that the following holds.

Let $a_1=b_1$ and $a_i=a_{i-1}b_i$ for $2 \le i \le k$.  In particular, $b_i=\frac{a_i}{a_{i-1}}$ for $i \ge 2$.  

Then there exists some $x_k$ so that for all good polynomials $P$ of size at most $c_i$ and all $0 \le i<k$, $$\left(\prod_{j=1}^{i}a_j\right)x_k+P(a_{i+1},b_{i+2}, \cdots, b_k)$$ is the same color in the coloring $C$ as $\left(\prod_{j=1}^{i}a_j\right)x_k$.  

\end{lemma}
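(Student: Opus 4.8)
I would prove the lemma by iterating the polynomial van der Waerden theorem $k$ times, carrying out in full the construction sketched for three colours in Section~\ref{section:3color}. The data are built in $k$ stages; at stage $m$ I maintain a ``current basepoint'' $x^{(m)}\in T$ together with $b_1,\dots,b_m$ (hence $a_1,\dots,a_m$), and I will take $x_k:=x^{(k)}$. The sizes $c_0,c_1,\dots,c_k$ are fixed beforehand by descending recursion starting from the given $c_k$: having chosen $c_{i+1},\dots,c_k$, I set $c_i$ to be a (concrete, primitive recursive in $c_{i+1}$) quantity large enough to absorb all the corrections introduced at stages $\ge i+1$; iterating a fixed primitive recursive function $k$ times keeps the whole sequence primitive recursive, and in particular $c_i$ is a primitive recursive function of $c_{i+1}$.

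\textbf{One stage.} Stage $1$ is a direct application of polynomial van der Waerden over the semiring $T$ with the list of all good one-variable polynomials of size at most $c_0$, producing $x^{(1)},a_1=b_1\in T$ (of size bounded, by Shelah's primitive recursive bounds for polynomial Hales--Jewett, and hence for polynomial van der Waerden over any semigroup) with $x^{(1)}+P(a_1)$ the same colour as $x^{(1)}$ for all good $P$ of size $\le c_0$; this is the ``level $0$'' instance. At stage $m$ ($2\le m\le k$), fix a large power $L$ (say $L=k$) and pass to the sub-semiring $T_m\subset T$ of good polynomials in the single element $a_{m-1}^{L}$, which is isomorphic to $T$; since $a_{m-1}=b_1b_2\cdots b_{m-1}$ while $\prod_{j<m}a_j=b_1^{m-1}b_2^{m-2}\cdots b_{m-1}$, every element of $T_m$ is divisible by $\prod_{j<m}a_j$ and also by $a_{m-1}$ to a high power (this is the general-degree analogue of the ``$P(d^2)$'' trick). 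Applying polynomial van der Waerden over $T_m\cong T$ to the colouring $w\mapsto C\bigl((\prod_{j<m}a_j)\,x^{(m-1)}+w\bigr)$ with all good one-variable $Q$ of size $\le c_m$ yields $W,D\in T_m$ of size primitive recursive in $c_m$ with $(\prod_{j<m}a_j)x^{(m-1)}+W+Q(D)$ monochromatic over such $Q$. I then set $a_m:=D$, $b_m:=a_m/a_{m-1}$ (a good polynomial, as $D\in T_m$ and $L\ge 2$), and $x^{(m)}:=x^{(m-1)}+W/\prod_{j<m}a_j\in T$; this gives the ``level $m-1$'' instance $(\prod_{j\le m-1}a_j)x^{(m)}+Q(a_m)$ the same colour as $(\prod_{j\le m-1}a_j)x^{(m)}$ for all good $Q$ of size $\le c_m$.

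\textbf{Persistence of earlier levels.} The crucial point is that the correction $W/\prod_{j<m}a_j$ and the new variable $b_m$ are good polynomials in $a_{m-1}$, and, chasing definitions, good polynomials in $a_{i+1}$ for every $i<m-1$, of size bounded by a primitive recursive function of $c_m$. Hence for $i<m-1$ and any good $P$ of size $\le c_i$, substituting $b_{i+2},\dots,b_m$ (each a good polynomial in $a_{i+1}$) and folding in the accumulated corrections rewrites $(\prod_{j\le i}a_j)x^{(m)}+P(a_{i+1},b_{i+2},\dots,b_m)$ as $(\prod_{j\le i}a_j)x^{(i+1)}+(\prod_{j\le i}a_j)\,P'(a_{i+1})$ for a single-variable good $P'$ whose size is bounded by a primitive recursive function of $c_{i+1}$; the descending choice of the $c_i$ forces $P'$ of size $\le c_i$, so this equals a colour-class member of the level-$i$ instance already established at stage $i+1$. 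Applying the same rewriting with $P$ the zero polynomial shows $(\prod_{j\le i}a_j)x^{(m)}$ has the same colour as $(\prod_{j\le i}a_j)x^{(i+1)}$, and the two facts together give the required monochromaticity. Taking $x_k=x^{(k)}$ and letting $m=k$ ranges over all $i<k$, proving the lemma, with each $c_i$ primitive recursive in $c_{i+1}$ by construction.

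\textbf{Main obstacle.} The only external input is polynomial van der Waerden over semigroups with Shelah's primitive recursive bounds, invoked once per stage; the real work is the bookkeeping in the previous paragraph --- verifying that every correction and every new variable can be re-expressed as a good polynomial of controlled size in the appropriate earlier variable $a_{i+1}$, which is precisely what motivates passing to the high-power sub-semiring $T_m$ so that $\prod_{j<m}a_j$ divides $W$ (allowing a single clean basepoint $x_k$), and pinning down the explicit descending recursion for $c_0,\dots,c_k$ so that each correction is comfortably absorbed while the chain stays primitive recursive.
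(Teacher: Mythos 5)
Your construction is essentially the paper's proof: the paper argues by induction on $k$, and unrolling that induction gives exactly your iteration --- at each stage one applies the polynomial van der Waerden theorem inside the semigroup of good polynomials of a high power of $a_{m-1}$ (the paper uses $a_k^{k+1}$, you use $a_{m-1}^{k}$) to the shifted coloring, sets $x^{(m)}=x^{(m-1)}+W/\prod_{j<m}a_j$ and $b_m=a_m/a_{m-1}$, and checks that earlier levels persist because the correction and the new variable can be rewritten as good polynomials in the earlier variables. So the route is the same; the one substantive problem is that your size bookkeeping, as stated, does not close.

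Concretely: at stage $i+1$ your van der Waerden application only guarantees monochromaticity of $\left(\prod_{j\le i}a_j\right)x^{(i+1)}+Q(a_{i+1})$ for single-variable good $Q$ of size at most $c_{i+1}$ (your convention that stage $m$ uses size $\le c_m$; note this already clashes with stage $1$ using $c_0$), yet in the persistence step you allow $P$ of size up to $c_i>c_{i+1}$ at level $i$ and assert that the folded polynomial $P'$ has size bounded by a primitive recursive function of $c_{i+1}$, hence $\le c_i$, and that this suffices. Neither half is right: $P'$ contains $P$, so $s(P')\ge s(P)$, which may be as large as $c_i$ (and folding a multivariable $P$ through the substitutions $b_j\mapsto$ good polynomials in $a_{i+1}$ strictly inflates the size), while what you actually need is $s(P')\le c_{i+1}$, the wildcard budget of the instance you invoke. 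So a single decreasing sequence $c_0\ge c_1\ge\cdots\ge c_k$ cannot serve simultaneously as the level budgets promised in the lemma and as the van der Waerden budgets; the required inequalities run in opposite directions. The repair is what the paper does: promise at every level only polynomials of size up to the final input $c_k$ (which is all the downstream application needs), and let the per-stage wildcard budgets form a separate, backwards-growing primitive recursive sequence --- in the paper's inductive formulation this is automatic, because each step absorbs only the single newest correction into the multivariable inductive hypothesis, with the inductive input $c_k$ chosen large as a primitive recursive function of $c_{k+1}$. With that decoupling your argument goes through and coincides with the paper's proof.
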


Note that when $i=0$, $\left(\prod_{j=1}^{i}a_j\right)x_0=x_0$, and when $i=k-1$, $P(a_{i+1}, \cdots)=P(a_k)$.  

\begin{proof}

We induct on $k$.  The base case $k=0$ is trivial.  Assume now that the lemma holds for some $k$.  We assume that $c_k$ is a large enough function of $c_{k+1}$, to be specified later.  By the polynomial van der Waerden theorem over semigroups \cite{joelblog}, for any $n$ there exists some $f,e$ which are good polynomials of $a_k^{k+1}$ so that for any good polynomial $P$ with $s(P) \le c_{k+1}$, all of $\left(\prod_{j=1}^{k}a_j\right)x_k+f+P(e)$ are the same color as $\left(\prod_{j=1}^{k}a_j\right)x_k+f$. 

Now, we set $x_{k+1}=x_k+\frac{f}{\prod_{j=1}^{k}a_i}$ and $b_{k+1}=\frac{e}{a_k}$ so that $e=a_{k+1}$. For $0 \le i<k$ and $P$ of size at most $c_{k+1}$, $$\left(a_1 \cdots a_i\right)(x_{k+1})+P(a_{j+1}, b_{j+2}, \cdots b_k, b_{k+1})=\left(a_1 \cdots a_i\right)\left(x_k+\frac{f}{\prod_{j=1}^{k}a_j}\right)+P(a_{i+1}, b_{i+2}, \cdots, b_k, b_{k+1})$$ $$=\left(a_1 \cdots a_i\right)x_k+\frac{f}{a_{i+1} \cdots a_k}+P(a_{i+1},b_{i+2}, \cdots b_k, b_{k+1})$$ $$=(a_1 \cdots a_j)x_k+P'(\frac{a_k}{a_{i+1}}, \cdots, \frac{a_k}{a_{k-1}},a_k)+P(a_{i+1},b_{i+2}, \cdots b_k, b_{k+1})$$ $$=(a_1 \cdots a_i)x_k+P''(a_{i+1}, b_{i+2}, \cdots, b_k, b_{k+1})$$ for some good polynomial $P''$, recalling again that $f$ is a polynomial of $a_k^{k+1}$.  We can ensure that $P''$ has size less than $c_k$, by choosing $c_k$ large enough.

For $i=k$, $$\left(a_1 \cdots a_k\right)(x_{k+1})+P(a_{k+1})=\left(a_1 \cdots a_k\right)\left(x+\frac{f}{\prod_{j=1}^{k}a_j}\right)+P(e)=(a_1 \cdots a_k)x+f+P(e)$$ has the same color as $\left(a_1 \cdots a_k\right)x_{k+1}=(a_1 \cdots a_k)x_{k+1}+f$ for all $s(P) \le c_{k+1}$. \end{proof}

%
%

This proof yields primitive recursive bounds.  In particular, $c_k$ will be a primitive recursive function of $c_{k+1}$, as due to Shelah \cite{shelah}, we have primitive recursive bounds in the polynomial Hales-Jewett theorem and thus in polynomial van der Waerden over a semigroup.

By passing to a subset of the $(\prod_{j=1}^{i} a_j)x$ with a fixed color, we can finish.  Explicitly, if assume without loss of generality that $a_1 \cdots a_{\ell_i}$, and so on are the same color for $\ell_1<\ell_2<\cdots$, and set $c_1=a_1 \cdots a_{\ell_1}$, $c_2=a_{\ell+1} \cdots a_{\ell_2}$, and in general $c_n=a_{\ell_{n-1}+1} \cdots a_{\ell_n}$, then all $\left(\prod_{j=1}^i c_j\right)x+P(c_{j+1}, \cdots, c_n)$ are the same color.  In particular, $\{x,x+P_1(c_1), \cdots, x+P_k(c_1),c_1x\}$ are the same color, for any finite family of good polynomials $P_i$.  What we actually get is slightly more general, and very similar to Theorem 1.4 of \cite{moreira}.

\section{Discussions and Open Problems}~\label{section:discussion}

Some discussion about partition regularity over various rings and semi-rings is in order.  There is an important way in which our proof differs from existing proofs in the literature. For $d \ge 1$, let $P_d$ be the set of polynomials of a countable set of variables $t=x_1, x_2, \cdots, x_n, \cdots$ with nonnegative integer coefficients, constant term, and degree at most $d$ in each variable.  Let $P=P_{\infty}$.  In particular, $P \supset T$.  Note that for $d<\infty$, $P_d$ is actually not a semi-ring, as for instance it contains $x_1$ but does not contain $x_1^{d+1}$.

Moreira's result \cite{moreira} is stated over $\mathbb{N}$, and unlike our proof, it is more natural in that setting.  However, as it only uses multilinear polynomials, it can be made to work over $P_1$ \cite{bowen} using the methods of \cite{bowencomposite}.  Previous work in the area, such as Bowen and Sabok's proof of the partition regularity of $\{x,y,x+y,xy\}$ over the rationals \cite{bowenrational} and Bowen's proof of the $2$ color case \cite{bowen2color} can similarly be made to work over, say, $P_5$.  In other words, these proofs are not using high degree polynomials.  We conjecture this is insufficient to settle the whole Hindman conjecture.

\begin{conjecture}
	If $1 \le d<\infty$, then $\{x,y,x+y,xy\}$ is not partition regular over $P_d$.
\end{conjecture}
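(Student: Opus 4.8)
The conjecture is open; what follows is the natural line of attack together with the point at which it stalls.

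The plan is to construct, for each fixed $d$, an explicit finite coloring of $P_d$ with no monochromatic $\{x,y,x+y,xy\}$. One should keep in mind that requiring $xy\in P_d$ forces $\deg_i(x)+\deg_i(y)\le d$ for every variable $x_i$ (this is the \emph{only} way $P_d$ differs from the semiring $P=P_\infty$, over which the pattern is instead conjectured to be partition regular), and that $x+y$ and $xy$ have multidegrees $\max(\mathbf x,\mathbf y)$ and $\mathbf x+\mathbf y$. The obvious first attempt is to isolate a numerical invariant $\nu(f)$ that grows additively under the (constrained) product — the total degree, the least number of variables appearing in a monomial of $f$, the number of distinct monomials of $f$, the $2$-adic valuation of the leading coefficient under a fixed monomial order all have $\nu(xy)=\nu(x)+\nu(y)$ — and then color by $\nu(f)\bmod m$, or by $\lfloor\log_2\nu(f)\rfloor\bmod 2$, in the hope that $\nu(xy)=\nu(x)+\nu(y)$ cannot coexist with $\nu(xy)$ sharing a color with $\nu(x)=\nu(y)$.

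Every such single-invariant coloring provably fails, and seeing exactly why is the heart of the matter. If $\nu$ is moreover additive under disjoint-support sums — the number of monomials is — then taking $x=x_1+\cdots+x_a$, $y=x_{a+1}+\cdots+x_{a+b}$ turns $\{x,y,x+y,xy\}$ into $\{a,b,a+b,ab\}$, so a coloring of $P_d$ of the form $c(f)=g(\nu(f))$ with no monochromatic pattern would yield a coloring $g$ of $\mathbb N$ with no monochromatic $\{a,b,a+b,ab\}$, i.e.\ would refute Hindman's conjecture over $\mathbb N$ — the opposite of what is believed. If instead $\nu(x+y)=\max(\nu(x),\nu(y))$, one still finds a monochromatic pattern by choosing $\nu(x)$ minimal inside a color class and $\nu(y)$ huge, since a bounded perturbation does not change the coarse color of a large value: already $x=x_1\cdots x_m$, $y=x_{m+1}\cdots x_{2m}$ (so that the invariant ``least number of variables in a monomial'' takes values $m,m,m,2m$) defeats the ``$\nu\bmod m$'' coloring, and a similar small-value trick defeats the dyadic-scale colorings. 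Purely multidegree-based colorings are even more clearly hopeless: by the Finite Unions Theorem (a corollary of Hindman's theorem) there are pairwise disjoint finite $A,B\subset\mathbb N$ with $\prod_{A}x_i$, $\prod_{B}x_i$, $\prod_{A\cup B}x_i$ monochromatic, and $\prod_{A}x_i+\prod_{B}x_i$ has the same multidegree as $\prod_{A\cup B}x_i$, hence the same color. So any coloring that works must look past the multidegree, into the coefficient and monomial-support data, and must combine several invariants so that the adversary cannot satisfy the Schur-type constraint for all of them at once.

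The obstacle is that a monochromatic $\{x,y,x+y,xy\}$ uses only a single multiplication, so whatever makes $P_d$ ``smaller'' than $P_\infty$ must be exploited at the level of one product: one must forbid $c(x)=c(y)=c(xy)$ for all $x,y$ with $\mathbf x+\mathbf y\le(d,d,\dots)$, and the disjoint-support instances of this are abundant and genuinely Schur-like at every scale. The one structural difference between $x+y$ and $xy$ available to exploit is that every monomial of $xy$ splits as a nontrivial product of a monomial supported on the variable set of $x$ and one supported on the variable set of $y$, whereas every monomial of $x+y$ is supported entirely on one of these two sets; but turning this into a single finite coloring uniform over all of $P_d$ is precisely what I do not see how to do, and is the content of the conjecture. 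Dually, a proof that the pattern \emph{is} partition regular over $P_d$ would have to avoid the unbounded-degree iteration that all known proofs use — over $T$, the proof of the Lemma of Section~\ref{section:general} forms polynomials in $a_k^{k+1}$ whose degrees tend to infinity with the number of colors — and it is widely expected that no such bounded-degree argument exists. For these reasons the statement is recorded as a conjecture rather than a theorem.
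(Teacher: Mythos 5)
You have correctly identified that this statement is a conjecture in the paper, not a theorem: the paper offers no proof, so there is nothing to compare your attempt against, and declining to claim a proof is the right call. Two remarks on your discussion. First, the paper does record one piece of concrete progress immediately after the conjecture: the case $d=1$ is known, via the $2$-coloring of $P_1$ in which reducible polynomials are red and irreducible ones are blue (then $xy$ is always red, so a monochromatic pattern would force $x$, $y$, $xy$, $x+y$ all reducible, which the exercise rules out). This is exactly the kind of ``structural'' coloring, beyond numerical invariants, that your last paragraph gropes toward, so your claim that you do not see how to produce any such coloring should at least be qualified by this $d=1$ example; the open problem is extending something like it to $d\ge 2$. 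Second, a small internal slip: you list ``the number of distinct monomials'' among invariants with $\nu(xy)=\nu(x)+\nu(y)$, but for disjoint-support $x,y$ it is multiplicative, $\nu(xy)=\nu(x)\nu(y)$ — which is in fact what you use in the next paragraph when you reduce to the pattern $\{a,b,a+b,ab\}$ over $\mathbb{N}$, so the reduction itself is fine, only the earlier classification of the invariant is off.
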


We do know that this conjecture is true for $d=1$. It is a nice exercise to check that the $2$-coloring of $P_1$ where reducible polynomials are red and irreducible polynomials are blue avoids a monochromatic $\{x,y,x+y,xy\}$. We make a complementary conjecture.

\begin{conjecture}
	
For all $1 \le i<j$, there exists some pattern $\F$ so that $\F$ is partition regular in $P_j$ but not in $P_i$, but $\F$ does appear in $P_i$ (i.e. the $1$-coloring of $\F$ contains a monochromatic $P_i$).

\end{conjecture}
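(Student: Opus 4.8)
The plan is to reduce to the consecutive case and then attack the two directions separately. First I would observe that partition regularity is monotone in the degree bound: since $P_{i+1}\subseteq P_j$ whenever $j\ge i+1$, any finite coloring of $P_j$ restricts to a finite coloring of $P_{i+1}$, and a monochromatic copy of $\F$ found inside $P_{i+1}$ is \emph{a fortiori} a monochromatic copy inside $P_j$ for the same coloring. Hence if for every $i\ge 1$ I can produce a single pattern $\F_i$ that is realizable in $P_i$, is partition regular over $P_{i+1}$, and is \emph{not} partition regular over $P_i$, then for any given $1\le i<j$ the same $\F_i$ witnesses the conjecture. So it suffices to separate consecutive levels.

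For the positive direction (partition regularity over $P_{i+1}$) I would look for a pattern that genuinely mixes addition and multiplication, since neither ingredient alone can succeed. An additive-only pattern is partition regular over \emph{every} semigroup $(P_d,+)$ by the polynomial van der Waerden theorem over semigroups quoted in Section~\ref{section:general}, so it can never fail in $P_i$; while a purely multiplicative pattern strong enough to feel the degree cap---such as the length-$(i+2)$ geometric progression $\{M,MN,\dots,MN^{i+1}\}$ produced by Hales--Jewett over the alphabet $\{0,1,\dots,i+1\}$---already violates the realizability requirement, since it contains $MN^{i+1}$ and so cannot be realized at all inside $P_i$. The pattern $\F_i$ must therefore use a multiplication that consumes exactly one unit of degree beyond what $P_i$ allows, and the natural source is the color-focusing argument of Section~\ref{section:general} itself: I would track the degree that argument consumes and try to design $\F_i$ so that a \emph{single} pass of the lemma suffices, uniformly over all finite colorings.

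For the negative direction (failure over $P_i$) I would generalize the reducible/irreducible coloring that the author uses to rule out $\{x,y,x+y,xy\}$ in $P_1$. The idea is to build a finite coloring of $P_i$ from a factorization- or grading-type invariant that is sensitive to the degree-$i$ cap: because $P_i$ is not closed under multiplication, any product forced by $\F_i$ must either leave $P_i$ (so that no copy exists) or collapse its factorization structure in a way the coloring can detect, exactly as $xy$ is forced to be reducible in $P_1$. Making this precise for a chosen $\F_i$, and then checking that the same invariant genuinely stops obstructing once a single extra degree becomes available in $P_{i+1}$, is where the argument would have to be grounded in the specific pattern; the case $i=1$ (generalizing the reducible/irreducible exercise) is where I would calibrate the construction.

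The hard part will be reconciling realizability in $P_i$ with partition regularity at the \emph{finite} level $P_{i+1}$. Realizability forces the defining expressions of $\F_i$ to be of low degree in the pattern variables, yet the only mechanism we have for \emph{forcing} a genuine sum-and-product pattern---the color-focusing of Section~\ref{section:general}---consumes an amount of degree that grows with the number of colors, which is precisely why the main theorem needs the unbounded-degree space $T$ rather than any $P_d$. Consequently the obvious candidates, such as $\{x,x+y,xy\}$ and $\{x+y,xy\}$, are plausibly \emph{not} partition regular over any finite $P_d$, which would separate $P_\infty$ from every finite level but would fail to separate consecutive finite levels. The decisive new idea must be a bounded-height partition-regularity mechanism---one whose degree cost is bounded independently of the number of colors while still strictly benefiting from the single extra degree in $P_{i+1}$---and I expect producing it, rather than the monotonicity reduction or the coloring construction, to be the genuine obstacle. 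This is also why $\F_i$ cannot be the full Hindman pattern $\{x,y,x+y,xy\}$, which by the conjecture stated just above is expected not to be partition regular over any finite $P_d$, so a strict and properly weaker pattern must be engineered.
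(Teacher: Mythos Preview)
The statement you are addressing is labeled a \emph{conjecture} in the paper, and the paper does not supply a proof. There is therefore nothing to compare your attempt against: the author offers the conjecture as an open problem complementary to Conjecture~4.1, with only the remark that the $d=1$ case of Conjecture~4.1 follows from the reducible/irreducible $2$-coloring of $P_1$.

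Your proposal is candid about this: it is a strategy outline rather than a proof, and you correctly isolate the decisive missing ingredient. The reduction to consecutive levels via monotonicity is fine, and the idea of generalizing the reducible/irreducible coloring to handle the negative direction is a reasonable heuristic. But the positive direction---showing that some mixed sum-and-product pattern $\F_i$ is partition regular over $P_{i+1}$ for \emph{every} finite number of colors---requires, as you yourself note, a partition-regularity mechanism whose degree consumption is bounded independently of the number of colors. The color-focusing argument of Section~3 does not provide this: each application of the lemma multiplies by a new $a_j$, so the degree consumed grows with the number of pigeonhole passes, which in turn grows with the number of colors. You do not propose any alternative mechanism, and none is known; this is precisely why the statement remains a conjecture. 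Your final paragraph already diagnoses this gap accurately, so the proposal should be read as a discussion of where the difficulty lies, not as a proof.
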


In particular, the previous conjecture says that we do \emph{not} think the pattern $\{x,y,x+y,xy\}$ is an example of such a pattern.  Clearly if $1 \le i \le j \le \infty$, and $F$ is partition regular over $P_i$, then it is partition regular over $P_j \supset P_i$.

\begin{lemma}~\label{lemma:contain}
A $\F$ is partition regular over $P=P_{\infty}$ if and only if it is partition regular over $T$.  If a pattern is partition regular over $T$ (with primitive recursive bounds), then it is partition regular over~$\mathbb{N}$ (with primitive recursive bounds).	 
\end{lemma}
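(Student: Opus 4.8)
The plan is to prove the two assertions separately. For the equivalence of partition regularity over $P = P_\infty$ and over $T$, one direction is immediate: since $T \subset P$, any coloring of $P$ restricts to a coloring of $T$, so if $\F$ is partition regular over $T$ it is partition regular over $P$. For the converse, I would fix a finite coloring $C$ of $T$ and a pattern $\F$ partition regular over $P$, and exhibit a structure-preserving embedding $\iota\colon P \to T$; pulling back $C$ along $\iota$ gives a finite coloring of $P$, a monochromatic copy of $\F$ there maps to a monochromatic copy in $T$. The natural choice for $\iota$ is a substitution sending each variable $x_n$ of the countable variable set to a suitable power $t^{e_n}$ of the single variable $t$, where the exponents $e_n$ grow fast enough (e.g. $e_n = (D+1)^{n}$ for a degree bound, or more carefully a Kronecker-substitution-style choice) that distinct monomials in the $x_n$ map to distinct monomials in $t$. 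The key point to check is that $\iota$ is injective on $P$ and respects $+$ and $\times$, so that it takes a copy of any pattern built from good polynomial expressions to a genuine copy of the same pattern in $T$; because $\F$ is built from good polynomials with zero constant term, $\iota$ keeps everything inside $T$.

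For the second assertion, suppose $\F$ is partition regular over $T$ (with primitive recursive bounds), and let $C$ be a finite coloring of $\N$. I would transport $C$ to a coloring of $T$ by evaluation: pick a fixed integer $\tau \ge 2$ and color a good polynomial $P(t)$ by $C(P(\tau))$. Since $P$ is good (nonnegative coefficients, zero constant term) and $\tau \ge 2$, $P(\tau)$ is a positive integer, so this is a genuine finite coloring of $T$. Partition regularity over $T$ gives a monochromatic copy of $\F$ in $T$, say realized by elements $x(t), y(t) \in T$ (and whatever auxiliary good-polynomial expressions $\F$ involves); evaluating at $t = \tau$ yields a monochromatic copy of $\F$ in $\N$, since evaluation at $\tau$ is a semiring homomorphism $T \to \N$ and hence commutes with all the sum-and-product expressions defining $\F$. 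For the quantitative claim, one must choose $\tau$ as a function of $n$ (the number of colors of the interval $[N]$) and track how large the polynomials produced by the $T$-statement can be: if the $T$-proof, run with $n$ colors, outputs a monochromatic copy using good polynomials of size at most $S(n)$ (a primitive recursive function, by the stated bounds), then evaluating at any fixed $\tau \ge 2$ lands inside $[N]$ for $N$ a primitive recursive function of $S(n)$ and $\tau$ — hence primitive recursive in $n$.

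The main obstacle is the bookkeeping in the first part: one must be careful that the exponent-assignment $x_n \mapsto t^{e_n}$ really is injective on all of $P_\infty$ — not just on monomials of bounded degree — and that it carries $\F$-copies to $\F$-copies rather than merely to monochromatic sets. Since elements of $P_\infty$ have bounded degree \emph{in each variable} but the bound can vary, and only finitely many variables appear in any given polynomial, a greedy choice of the $e_n$ (each $e_n$ chosen after fixing the degree bounds, large compared to all previously used exponents and degrees) suffices; packaging this cleanly, perhaps by noting that any finite sub-family of $\F$-copies uses only finitely many variables with explicit degree bounds, is the one place that needs care. The second part, by contrast, is essentially formal once the homomorphism $t \mapsto \tau$ is in hand, and the primitive-recursive bound is inherited directly from Shelah's bounds in the polynomial Hales–Jewett theorem as already noted in the proof of the Lemma above.
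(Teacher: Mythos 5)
Your second assertion is handled exactly as in the paper: the paper colors $p(t)\in T$ by the color of $p(2)$ under the given coloring of $\N$, and transfers the bounds; your evaluation map $t\mapsto\tau$ is the same argument and is fine. The trouble is in your first part. You make injectivity of the substitution $\iota\colon P\to T$, $x_n\mapsto t^{e_n}$, the ``key point'' and the ``main obstacle,'' and propose to secure it by a greedy, Kronecker-style choice of exponents. But no fixed assignment $x_n\mapsto t^{e_n}$ is injective on all of $P_\infty$: for any choice of $e_1,e_2$ the distinct monomials $x_1^{e_2}$ and $x_2^{e_1}$ have the same image $t^{e_1e_2}$. Nor can the exponents be chosen ``after fixing the degree bounds'': the pullback coloring must be defined on all of $P_\infty$ (which has no global degree bound) \emph{before} partition regularity hands you a monochromatic copy, so a copy-dependent choice of the $e_n$ does not yield a single coloring of $P$. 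As written, this step fails.

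Fortunately, injectivity is not needed, and dropping it recovers the paper's (much simpler) proof. All the argument uses is that the substitution is a semiring homomorphism $P\to T$ carrying nonzero good polynomials to nonzero good polynomials; then the image of any copy of $\F$ --- a family of good polynomial expressions in the pattern variables --- is again a copy of $\F$ in $T$, monochromatic for the original coloring. The paper simply sends every variable to $t$, i.e.\ colors $P(x_1,\dots,x_n)\in P$ by the color of $P(t,\dots,t)$; your $x_n\mapsto t^{e_n}$ with any fixed exponents would work equally well once the injectivity demand is dropped. (Note also that insisting on injectivity would be inconsistent with the second half anyway: the evaluation $p\mapsto p(2)$ used there is not injective on $T$ either, and the patterns considered here carry no distinctness constraints that would require it.)
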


\begin{proof}
Clearly if some pattern is partition regular over $T$, then it is partition regular over $P \supset T$. If a pattern $\F$ is not partition regular over $T$, then we can find a coloring $c$ of $T$ avoiding $\F$, and color any $P(x_1, \cdots, x_n) \in P$ with the color of $P(t,t, \cdots, t)$ in $c$, and so $\F$ will not be partition regular over $P$ either.

If there is a coloring $c$ of $\mathbb{N}$ avoiding $\F$, then we can color $p(t) \in T$ with the color of $p(2)$ in $c$.  This coloring of $T$ will avoid $\F$.  Thus, partition regularity over $T$ (with primitive recursive bounds) in turn implies partition regularity over $\mathbb{N}$ (with primitive recursive bounds). \end{proof}

	
Any nontrivial insight into the relationship between partition regularity over various settings would be interesting.  We conjecture that the second part of Lemma~\ref{lemma:contain} is not strict.

\begin{conjecture}
	A polynomial pattern $\F$ is partition regular over $T$ if and only if it is partition regular over~$\mathbb{N}$.  
\end{conjecture}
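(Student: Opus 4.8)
\emph{Towards the conjecture.} One of the two implications is already contained in Lemma~\ref{lemma:contain}: evaluation at an integer, say $p(t)\mapsto p(2)$, is a semi-ring homomorphism $T\to\mathbb{N}$, so pulling a finite $\F$-free coloring of $\mathbb{N}$ back along it yields a finite $\F$-free coloring of $T$, and hence partition regularity over $T$ implies partition regularity over $\mathbb{N}$. The substance of the conjecture is therefore the converse, and on taking contrapositives the goal is: starting from a finite coloring $c$ of $T$ with no monochromatic copy of $\F$, produce a finite coloring of $\mathbb{N}$ with no monochromatic copy of $\F$.

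The plan would be to mimic the easy direction in reverse. If there were a semi-ring homomorphism $\varphi\colon\mathbb{N}\to T$, then $c\circ\varphi$ would be the desired $\F$-free coloring of $\mathbb{N}$, since a homomorphism carries every instance of $\F$ over $\mathbb{N}$ to an instance of $\F$ over $T$. No such $\varphi$ exists, because $\varphi(1)$ would have to be a nonzero idempotent of $T$ and $p^2=p$ has no nonzero good solution for degree reasons; this is precisely why the two settings are a priori different and why no one-line reduction is available. The next thing I would try is a finitary, approximate homomorphism: for a large modulus $N$, map $m=\sum_{i\ge 0}a_iN^i$ (digits $0\le a_i<N$) to the good polynomial $\sum_{i\ge 0}a_it^{i+1}$, the degree shift being forced by the absence of a constant term, compose with $c$ to get a coloring $\chi_N$ of $\mathbb{N}$, and then assemble the $\chi_N$ into a single finite $\F$-free coloring of $\mathbb{N}$ by a standard compactness argument (having no monochromatic copy of $\F$ is a closed condition on colorings and each instance of $\F$ is finitary), provided that each $\chi_N$ really does avoid $\F$ on the relevant initial segment.

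Making such a $\chi_N$ genuinely $\F$-free is the main obstacle, and I expect it to be the real difficulty — it is the reason the statement is only a conjecture. The base-$N$ map respects addition only up to carries, and, worse, it is incompatible with multiplication even when there are no carries: $\psi_N(N^iN^j)$ has degree $i+j+1$ while $\psi_N(N^i)\psi_N(N^j)$ has degree $i+j+2$, which is again the ``no unit in $T$'' phenomenon. So one cannot simply restrict to a carry-free, multiplicatively closed subset of $\mathbb{N}$ (say sums of widely spaced powers with tiny coefficients); the degree bookkeeping still fails. A real proof would have to either (i) find a cleverer correspondence between $\mathbb{N}$ and $T$ tailored to the particular family $\F$ that only needs to respect the finitely many algebraic operations occurring in $\F$, and show such a correspondence exists for every partition-regular $\F$, or (ii) argue structurally that the grading of $T$ by degree can never create a monochromatic pattern not already forced over $\mathbb{N}$. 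Both routes appear to require genuinely new ideas, and even settling the conjecture for specific families $\F$ beyond the pattern $\{x,x+P_1(y),\dots,x+P_k(y),xy\}$ handled here would be of interest.
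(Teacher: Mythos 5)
This statement is not proved in the paper: it is stated as a conjecture, and the only proved content in its vicinity is Lemma~\ref{lemma:contain}, so there is no ``paper proof'' to compare yours against. Your treatment of the direction that \emph{is} known is correct and is exactly the paper's argument: evaluation $p(t)\mapsto p(2)$ respects the semi-ring operations and sends $T$ into $\mathbb{N}$, so an $\F$-free coloring of $\mathbb{N}$ pulls back to an $\F$-free coloring of $T$, giving ``partition regular over $T$ $\Rightarrow$ partition regular over $\mathbb{N}$.'' You also correctly identify the converse as the entire content of the conjecture, and your obstruction analysis is sound: there is no semi-ring homomorphism $\mathbb{N}\to T$ since $\varphi(1)$ would be a nonzero idempotent and $p^2=p$ forces $\deg p=0$, which is impossible in $T$; and the base-$N$ digit map fails multiplicativity for the degree-shift reason you give ($i+j+2$ versus $i+j+1$), so the compactness assembly you describe has nothing $\F$-free to assemble. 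To be clear, then: what you have written is an accurate account of why the statement is open, not a proof of it, and it should not be presented as one. The honest framing you end with — that one would need either a correspondence tailored to the finitely many operations occurring in a given $\F$, or a structural argument that the degree grading of $T$ cannot create new monochromatic patterns — is consistent with the paper, which offers no strategy beyond Lemma~\ref{lemma:contain} and leaves the converse entirely open.
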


In principle, one could ask about the problem for other semi-rings. Recall that an integral domain $D$ has \emph{characteristic $0$} if for any positive integer $k$, there exist some $x \in D$ so that $kx:=x+ \cdots +x \neq 0$.  The semi-ring $T$ is the most important semi-ring to consider in the following sense.


\begin{lemma}
	If some family $\mathcal{F}$ of good polynomials $P_1(a_1, \cdots, a_m), \cdots, P_k(a_1, \cdots, a_m)$ is partition regular over the semi-ring $T$, then it is partition regular over any integral domain $R$ of characteristic $0$.
\end{lemma}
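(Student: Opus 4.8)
The plan is to reduce partition regularity over an arbitrary characteristic-zero integral domain $R$ to partition regularity over $T$, by exhibiting a semi-ring homomorphism (or at least a map respecting the operations appearing in the pattern) from $T$ into $R$ whose image is infinite. Concretely, given a finite coloring of $R$, I want to pull it back to a finite coloring of $T$; then the monochromatic copy of $\F$ guaranteed in $T$ pushes forward to a monochromatic copy of $\F$ in $R$, \emph{provided} the map is injective enough that the copy of $\F$ it produces in $R$ is genuinely a copy of $\F$ (i.e. the relevant elements are distinct, or at least the pattern's defining relations are preserved, which they automatically are for any semi-ring homomorphism since the $P_i$ are good polynomials).

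First I would pick an element $\theta \in R$ that is not a root of any nonzero good polynomial; equivalently, $\theta$ should be such that the evaluation map $p(t) \mapsto p(\theta)$ is injective on $T$. Since $R$ has characteristic $0$, it contains (an isomorphic copy of) $\Z$, and more importantly $R$ contains elements that are transcendental over the prime subring, OR — if $R$ happens to be algebraic over $\Z$ — one still needs to argue a suitable $\theta$ exists. The cleanest route: if $R$ has characteristic $0$ it contains $\Z$, hence its fraction field contains $\Q$; either $R$ contains an element transcendental over $\Q$, in which case take $\theta$ to be that element and evaluation is injective on $T$ (indeed on all of $\Q[t]$), or $R$ is a domain algebraic over $\Q$, i.e. (a subring of) a number field, and here I would instead use a growth/size argument: choose $\theta \in \Z \subseteq R$ with $|\theta|$ large, mimicking the last part of Lemma~\ref{lemma:contain} (color $p(t) \in T$ by the color of $p(2)$), so that again distinct good polynomials of bounded size take distinct values — but since patterns only involve finitely many good polynomials $P_i$ of fixed size, it suffices that the finitely many elements of the would-be monochromatic copy are distinct in $R$, which follows because $\theta = 2$ already makes $p \mapsto p(2)$ injective after noting good polynomials with nonnegative integer coefficients and zero constant term are determined by their base-$2$ expansions. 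So in fact $\theta = 2$, viewed inside $R$ via the characteristic-$0$ embedding $\Z \hookrightarrow R$, works uniformly.

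So the key steps, in order, are: (1) fix the copy of $\Z$ inside $R$ provided by characteristic $0$, and let $e: T \to R$ be evaluation at $2 \in \Z \subseteq R$; (2) check $e$ respects addition and multiplication, so that it sends any instance of $\F$ in $T$ to an instance of $\F$ in $R$ — here one uses that each $P_i$ is a good polynomial, so $e(P_i(a_1,\dots,a_m)) = P_i(e(a_1),\dots,e(a_m))$; (3) check $e$ is injective (base-$2$ representation of nonnegative integers with the zero constant term making $t \mapsto 2$ unambiguous), so monochromatic copies stay honest copies with distinct entries; (4) given an $n$-coloring $C$ of $R$, form the $n$-coloring $C \circ e$ of $T$, apply the hypothesis that $\F$ is partition regular over $T$ to get a monochromatic copy in $T$, and push it forward through $e$.

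The main obstacle I anticipate is step (3) combined with the subtlety that $R$ need not contain $2$ as an invertible or even "generic" element — but since we only need the \emph{finitely many} specific elements of one monochromatic copy to be pairwise distinct, and these are images of distinct good polynomials evaluated at $2$, distinctness in $R$ follows from distinctness in $\Z$ (as $\Z \hookrightarrow R$), so this is not actually an obstacle. The only genuine care needed is to make sure the argument does not secretly require $R$ to contain denominators (it does not, since $T$, $\F$, and $e$ involve no division) and to confirm that "primitive recursive bounds" transfer: the bound $N(n)$ from partition regularity over $T$ works verbatim, since we are composing colorings and the size of the polynomials in the pattern is unchanged, so this is immediate.
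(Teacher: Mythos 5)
Your plan—pull back a coloring of $R$ along evaluation at an integer point and push a monochromatic configuration forward—is the easy reduction that works when $R$ is a unital integral domain, where characteristic $0$ gives $\mathbb{Z}\hookrightarrow R$ and the argument is essentially the second half of Lemma~\ref{lemma:contain} followed by the inclusion $\mathbb{N}\subseteq R$. But there are two concrete problems. First, your step (3) is false as stated: evaluation at $2$ is \emph{not} injective on $T$, since good polynomials may have coefficients larger than $1$ (e.g.\ $2t$ and $t^2$ are distinct elements of $T$ with the same value $4$ at $t=2$), so the ``base-$2$ expansion'' argument does not apply. This particular error happens to be harmless, because partition regularity does not require the entries of the configuration to be distinct—the paper's own transfer from $T$ to $\mathbb{N}$ uses the same non-injective evaluation $p\mapsto p(2)$—but the claim you lean on for ``honest copies with distinct entries'' is simply wrong.

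Second, and more seriously for the generality intended here, your step (1) assumes $R$ contains a copy of $\mathbb{Z}$, in particular the element $2$. The paper explicitly allows rings without $1$, and its definition of characteristic $0$ (``for every positive integer $k$ there is $x$ with $kx\neq 0$'') is phrased precisely so as not to presuppose an identity. For a domain such as $R=3\mathbb{Z}$ (no identity, no idempotents other than $0$, hence no embedded copy of $\mathbb{Z}$ and no element ``$2$''), evaluation at $2$ is meaningless, and more generally there is no distinguished element at which no nonzero good polynomial vanishes handed to you for free: in a domain like $\mathbb{Z}[i]$ the element $i$ kills the good polynomial $t^3+t$, so the choice of evaluation point genuinely matters. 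The paper's proof is built to handle exactly this: arguing by contradiction, it either finds, for every $n$, an element $t_n\in R$ at which no good polynomial of size at most $n$ vanishes (and then uses a compactness argument to produce a coloring of $T$ avoiding $\F$), or else it shows that some fixed nonzero integer polynomial vanishes identically on $R$, and a degree-reduction trick ($Q(x)=P(kx)-k^dP(x)$) plus the integral-domain property forces $kx=0$ for all $x$, contradicting characteristic $0$. Your argument, as written, covers only the unital case and needs this extra machinery (or at least an argument producing a suitable evaluation point in an arbitrary characteristic-$0$ domain in the paper's sense) to prove the lemma in full.
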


\begin{proof}
Assume $\mathcal{F}$ is not partition regular over $R$.

Say that there exists a sequence $t_1, \cdots, t_n, \cdots$ in $R$ so that $P(t_n) \neq 0$ for all good polynomials $P$ of size at most $n$.  By a standard compactness argument, there is a limit $t$, and a coloring of all good polynomials of $t$ avoiding $\mathcal{F}$.

If there exists a $c$ so that for all $x$, there is some good polynomial $P$ of size at most $c$, so that $P(x)=0$, then taking the product of all of these polynomials, there exists some $P$ so that $P(x)=0$ identically for $x \in R$.  If $P$ is not a monomial and has degree $d$, then we may pick an appropriate positive integer $k$ so that $Q(x)=P(kx)-k^dP(x)$ is also identically $0$, but is a nonzero polynomial with integer coefficients of smaller degree.  $Q$ may not be good, but this is not a problem, all we require is that it has integer coefficients.  We can reduce the degree until we find some $kx^d$ which is identically $0$ for $x \in R$.  Because $R$ is an integral domain, this means that $kx=0$, a contradiction. \end{proof}

Partition regularity over $T$ is also interesting for the following reason.  If there is a coloring of $T$ that avoids some polynomial pattern $\F$, we can assume without loss of generality that it is \emph{polynomially syndetic}, i.e. there exists a $c$ so that for any element $x$ and any color $K$, there is some polynomial $P$ of size at most $c$ so that $P(x)$ is color $K$.  This is again by compactness; if there exists a color $K$ and a sequence $x_1, \cdots, x_n, \cdots$ so that $P(x_n)$ is never color $K$ when $P$ is size at most $c$, then we can create a coloring of good polynomials of a limit $x$ that avoids $\F$ and does not use color $K$, and reduce the number of colors used.  





\section*{Acknowledgments} 
Thanks to Matthew Bowen and Marcin Sabok for helpful mathematical conversations, and Noga Alon, Matija Bucic, Sayan Goswami, Timothy Gowers, Zach Hunter, Noah Kravitz, Imre Leader, Joel Moreira, Ashwin Sah, and an anonymous reviewer for helpful comments on the exposition.

\bibliographystyle{amsplain}

\begin{thebibliography}{99}

\bibitem{alweiss}
R. Alweiss, Monochromatic Sums and Products over Q, available online at \texttt{https://arxiv.org/abs/2307.08901}

\bibitem{bergelson}
V. Bergelson. Ergodic Ramsey Theory-an update. Volume 228 of London Math. Soc. Lecture Note Ser, (1996), 1--61. 

\bibitem{polyhj}
V. Bergelson and A. Leibman. Set polynomial and polynomial extensions of the Hales-Jewett theorem. Ann. of Math, 150, (1999), 33--75.

\bibitem{bowen}
M. Bowen, personal communication.

\bibitem{bowencomposite}
M. Bowen, Composite Ramsey Theorems via Trees, available online at \texttt{https://arxiv.org/abs/2210.14311}

\bibitem{bowen2color}
M. Bowen, Monochromatic Sums and Products in 2-colorings of N, available online at \texttt{https://arxiv.org/abs/2205.12921}

\bibitem{bowenrational}
M. Bowen and M. Sabok, Monochromatic Sums and Products in the Rationals, available online at \texttt{https://arxiv.org/abs/2210.12290}

\bibitem{clp}
S. Chow, S. Lindqvist, S. Prendiville, Rado's criterion over squares and Higher Powers, Journal of the European Math. Soc., 23(6), (2021), 1925--1997.

\bibitem{green}
B. Green, Some Open Problems.

\bibitem{hindold}
N. Hindman, Partitions and sums and products of integers, Trans. Amer. Math. Soc. 247 (1979), 227--245.

\bibitem{hindman}
N. Hindman, I. Leader, and D. Strauss. Open Problems in Partition Regularity. Comb., Prob., Comput., 12(5-6), (2003), 571--583.

\bibitem{moreira}
J. Moreira, Monochromatic sums and products in N, Ann. Math. (3) 185, (2017), 1069--1090.

\bibitem{bl2}
V. Bergelson and A. Leibman, Polynomial extensions of van der Waerden's and Szemer\'edi's theorems. J. Amer. Math. Soc., 9(3) (1996), 725--753.

\bibitem{joelblog}
J. Moreira, New polynomial and multidimensional extensions of classical partition results,
\texttt{https://archive.ph/wip/DKpZ1}

\bibitem{rado}
R. Rado. Studien zur kombinatorik. Math. Zeit., 36, (1933), 242--280.

\bibitem{shelah}
 S. Shelah, A partition theorem, Scientiae Math Japonicae, 2002, 413--438. Paper 679 at the Shelah
Archive: https://shelah.logic.at/files/199205/679.pdf

\bibitem{walters}
M. Walters, Combinatorial proofs of the polynomial van der Waerden theorem and the polynomial Hales--Jewett theorem. Journal of the London Math. Soc., 61(1), (2000), 1--12.

\end{thebibliography}


\begin{dajauthors}
\begin{authorinfo}[ra]
  Ryan Alweiss\\
  University of Cambridge\\
  Cambridge, UK\\
  ra699\imageat{}cam\imagedot{}ac\imagedot{}uk \\
  \url{https://sites.google.com/view/ryan-alweiss/home}
\end{authorinfo}
\end{dajauthors}

\end{document}